\newtheorem{theorem}{Theorem}
\newtheorem{theo}{Theorem}
\newtheorem{remark}[theo]{Remark}
\date{}
\begin{document}

\title{\bf  A Novel Method to Construct \\
NSSD Molecular Graphs}

\author{{Umar Hayat}$^1$, {Mubasher Umer}$^1$, {
Ivan Gutman}$^2$, {Bijan Davvaz}$^{3}$, {\'Alvaro Nolla de Celis}$^{4}$\\
$^1$Department of Mathematics, Quaid-i-Azam University, \\
Islamabad, Pakistan \\
umar.hayat@qau.edu.pk \\
mumer@math.qau.edu.pk \\
$^2$Faculty of Science, University of Kragujevac, \\
P. O. Box 60, 34000 Kragujevac, Serbia\\
gutman@kg.ac.rs \\
$^3$Department of Mathematics, Yazd University, Yazd, Iran \\
 davvaz@yazd.ac.ir\\
$^4$Faculty of Teacher Training, Universidad Autónoma of Madrid, Spain \\
alvaro.nolla@uam.es }
\date{}
\maketitle

\begin{abstract}
A graph is said to be NSSD (=non-singular with a singular deck) if
it has no eigenvalue equal to zero, whereas all its
vertex--deleted subgraphs have eigenvalues equal to zero. NSSD
graphs are of importance in the theory of conductance of organic
compounds. In this paper, a novel method is described for
constructing NSSD molecular graphs from the commuting graphs of
the $H_v$-group. An algorithm is presented to construct the NSSD
graphs from these commuting graphs.\\ \\
{\bf Keywords:} H$_v$-group; Commuting graph; Non-singular graph
with singular deck. \\
AMS Mathematics Classification: 20N20.
\end{abstract}

\section{Introduction}

Beginning in 1970s, graph spectra found noteworthy applications in
chemistry, mainly in the area of molecular orbital theory
\cite{17,18}. One of the most recent developments along these
lines are the model of Fowler et al. \cite{new1}, describing the
electrical current created by the injection of ballistic electrons
via external contacts into an unsaturated conjugated molecule.
Within this model, the considered molecule is predicted to be an
insulator for all single-$\pi$-electron connections, if the
underlying molecular graph belongs to the class of NSSD graphs.
Let $G$ be a simple graph with vertex set
$V(G)=\{v_1,v_2,\ldots,v_n\}$ and edge set $E(G)$. Its adjacency
matrix $\mathbf A=(a_{ij})$ is defined so that $a_{ij}=1$ if the
vertices $v_i$ and $v_j$ are adjacent, and $a_{ij}=0$ otherwise
\cite{new2}. The eigenvalues of $\mathbf A$, denoted by
$\lambda_1,\lambda_2,\ldots,\lambda_n$ are said to be the
eigenvalues of the graph $G$ and to form the spectrum of $G$ \
\cite{new2}. The nullity of a graph $G$, denoted by $\eta(G)$, is
the number of eigenvalue that are equal to zero. If none of these
eigenvalues is equal to zero, $i.e.$, $\eta(G) = 0$ then the graph
is said to be non-singular. Otherwise, it is singular. The graph
$G$ is an NSSD graph (a {\bf N}on-{\bf S}ingular graph with a {\bf
S}ingular {\bf D}eck) if it is non-singular, and if all its
vertex-deleted subgraphs $G-v_i \ , \ i=1,2,\ldots,n$ are singular
\cite{14,15,new4}. The term NSSD was  introduced in \cite{new5},
motivated by the search for carbon molecules in the Huckel model.
The first step in the history of the development of hyperstructure theory
was the 8th congress of Scandinavian mathematician from $1934$, when Marty
\cite{10} put forward the concept of hypergroup, analyzed its properties
and showed its utility in the study of groups, algebraic functions,
and rational fractions. Eventually, hyperstructure theory found
applications in the field of cryptography, geometry, graphs,
hypergraphs, binary relations, theory of fuzzy, coding theory,
automata theory, etc. The correspondence between hyperstructure
and binary relations is implicity contained in Nieminen \cite{9}
who associated hypergroups to connected simple graphs; for further
work in this direction see \cite{6,8,12,13}. In 1990, Vougiouklis
introduced the concept of H$_v$-structure \cite{13}. The main idea
of $H_v$-structures is in establishing a generalization of the
other algebraic hyperstructures. In fact, some axioms related to
these hyperstructures are replaced by their corresponding weak
axioms.

Various classes of NSSD graphs and their construction are described in recent articles \cite{14,new6}. Some necessary and
sufficient conditions are obtained for a two-vertex-deleted subgraph of an NSSD
graph $G$ to remain an NSSD by considering triangles in the inverse NSSD $G^{-1}$
\cite{new7}. In this paper, we present a new method for constructing NSSD graphs,
utilizing hyperstructure theory and commuting graphs. We also present an
algorithm written in $GAP$ language to construct NSSD graphs from
these commuting graphs. The paper is structured as follows. In
Section $2$, we consider an $H_{v}$-group. We discuss its
commuting graphs and establish some NSSD graphs. We present some
algorithms. Using these algorithms we determine NSSD graphs. In
Section $3$, we find some NSSD molecular graphs from these
commuting graphs. Conclusions are made in Section $4$.

\section{Commuting Graphs on $H_v$-Group and an  Algorithm to Determine NSSD Graphs}

In this section we discuss some metric properties of commuting
graphs on $H_v$-group. Recall that in a commuting graph, two
elements are joined by an edge if they commute with each other.
For further study of commuting graphs see \cite{1,2,3,4,5,11}.

Let $J$ be a non-empty set. A hyperoperation on a non-empty set $J$ is a
mapping $\circ :J\times J\rightarrow \mathcal{P}^{\ast }(J)$, where $\mathcal{P}^{\ast }(J)$
denotes the set of all non-empty subsets of $J.$ If $U$, $V$ are non-empty subsets
of $J$ and $x\in J$, then we define
$$
U\circ V=\displaystyle \bigcup_{x\in U \atop y\in V}x\circ y, \
x\circ V=\left\{ x\right\}\circ V \ {\rm and} \  V\circ x=V\circ
\left\{ x\right\}.
$$

An algebraic hyperstructure $(J,\circ)$ is said to be an
$H_v$-group if it satisfies the following properties
\begin{itemize}
\item[(1)] $ (J,\circ)$ is weakly associative, i.e., $ s\circ
(t\circ u) \cap (s\circ t) \circ u\neq \emptyset$,  for all
$s,t,u\in J$.

\item[(2)] $x\circ J=J=J\circ x$,  for all $x\in J$.
\end{itemize}

The dihedral group of order $2n$ is given by, $D_{2n}=\langle
a,b:$ $a^{n}=b^{2}=1,$ $ab=ba^{-1}\rangle$. We have constructed an
$H_v$-group $(D_{2n},\circ)$, where $D_{2n}$ is the dihedral group
and $\circ $ is the hyperoperation such that
$
\circ :D_{2n}\times D_{2n}\rightarrow \mathcal{P}^{\ast }(D_{2n})
$
defined by
\begin{equation}  \label{1}
x\circ y=\left\{ xy,xy^{-1},a,a^{-1},a^{2},a^{-2},b\right\} \text{\ \ for all }x,y\in D_{2n}\,,
\end{equation}
where on the right--hand side, $a$, $a^{-1}$, $a^{2},$ $a^{-2}$,
and $b$ are fixed elements of $D_{2n}$, while $x$, $y$ are any two
general elements of $D_{2n}$. In what follows, we discuss the
properties of commuting graph of this $H_v$-group. In the
remaining part of this paper, the $H_v$-group $(D_{2n},\circ)$ is
denoted by $H^{\natural }$. First of all, we have to find those
elements that commute with each other. The elements of $D_{2n}$
are of the type $a^{i}$, $a^{i}b$, for
$i\in\left\{1,2,\ldots,n\right\}$. Therefore, the compositions of
the elements of this $H_{v}$-group are possibly of the types
$a^{i}\circ a^{j}$, $a^{i}\circ a^{j}b$, $a^{i}b\circ a^{j}b$, for
$i,j\in\left\{1,2,\ldots,n\right\}$. We first consider the
compositions $a^{i}\circ a^{j}$, $a^{j}\circ a^{i}$ and find those
elements that commute with each other. Note that
\begin{align}
a^{i}\circ a^{j} &=\left\{ a^{i}\cdot a^{j},a^{i}\cdot a^{-j},a,a^{-1},a^{2},a^{-2},b\right\} \nonumber \\
&=\left\{ a^{i+j},a^{i-j},a,a^{-1},a^{2},a^{-2},b\right\},  \label{equ:2}
\end{align}
and
\begin{align}
a^{j}\circ a^{i} &=\left\{ a^{j}\cdot a^{i},a^{j}\cdot a^{-i},a,a^{-1},a^{2},a^{-2},b\right\} \nonumber \\
&=\left\{ a^{j+i},a^{j-i},a,a^{-1},a^{2},a^{-2},b\right\}. \label{equ:3}
\end{align}
If $j=i+1$, then the equations $(\ref{equ:2})$ and $(\ref{equ:3})$ become
\begin{eqnarray}
a^{i}\circ a^{j} & = & \left\{ a^{2i+1},a,a^{-1},a^{2},a^{-2},b\right\}, \label{equ:4} \\
a^{j}\circ a^{i} & = & \left\{ a^{2i+1},a,a^{-1},a^{2},a^{-2},b\right\}. \label{equ:5}
\end{eqnarray}
Thus $a^{i}$ commutes with $a^{i+1}$ for all $i\in\left\{1,2,\ldots,n\right\}$.
Similarly, for each $i\in\left\{1,2,\ldots,n\right\}$, we can see that $a^{i}$
commutes with $a^{j}$, where $j=i+1$, $i-1$, $i+2$, $i-2$, and also for
$j=\frac{n}{2}+i$, if $n$ is an even integer. In an analogous manner, one can check the
other compositions and find the elements that commute with each other.

Let $\Gamma $ be a subset of the $H_v$-group $(D_{2n},\circ)$. The
vertices of the commuting graph are the elements of $\Gamma $,
where any two different vertices $s,t\in \Gamma $ are joined by an
edge if $s\circ t=t\circ s$. The degree $deg_{G}(s)$ of a vertex
$s \in V(G)$ of a graph $G$ is the number of first neighbors of
$s$. The following two theorems explain about the degree of each
vertex in the commuting graph $G=C( H^{\natural },H^{\natural})$.

\begin{theorem}
\label{L1}Let $H^{\natural }= (D_{2n},\circ)$ be an $H_v$-group
for an even integer $n\geq 6$ and $G=C( H^{\natural
},H^{\natural})$ be a commuting graph. Then \begin{itemize}
\item[(1)] $\deg _{G}(a^{i}) =\left\{\begin{tabular}{ll}
$6$ &  \text{if}\ $i\neq n, n/2$ \\
$n+5$ & \text{if}\ $i=n, n/2$.
\end{tabular}
\right. $ \item[(2)] $ \deg _{G}(a^{i}b)
=\left\{\begin{tabular}{ll}
$8$ &  \text{if} \ $i\neq n, n/2$ \\
$7$ &  \text{if} \ $i=n, n/2$\,.
\end{tabular}
\right. $
\end{itemize}
\end{theorem}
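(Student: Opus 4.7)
The plan is to read off commutation in $(D_{2n},\circ)$ by observing that
$$x\circ y=\{xy,\,xy^{-1}\}\cup F,\qquad F:=\{a,a^{-1},a^{2},a^{-2},b\},$$
so $x\circ y=y\circ x$ is equivalent to the ``variable'' pair $\{xy,xy^{-1}\}$ agreeing with $\{yx,yx^{-1}\}$ modulo the fixed set $F$. The dihedral identities $b^{2}=1$ and $ba=a^{-1}b$ collapse every product to a canonical form $a^{k}$ or $a^{k}b$; in particular $(a^{k}b)^{-1}=a^{k}b$, which forces the two variable slots to coincide whenever a reflection is involved. This reduces each case to a comparison of two cyclic offsets.

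I would then handle the three possible types of unordered pairs in turn, mimicking the pattern of equations (\ref{equ:2})--(\ref{equ:3}). For two rotations $a^{i},a^{j}$ the products $xy$ and $yx$ already coincide, so the question reduces to $\{a^{i-j}\}\cup F=\{a^{j-i}\}\cup F$; this holds iff either $a^{i-j}=a^{j-i}$ (equivalently $j\equiv i\pmod{n/2}$, yielding the single extra neighbour $a^{i+n/2}$) or both $a^{i-j},a^{j-i}\in F$ (equivalently $j-i\in\{\pm 1,\pm 2\}\pmod n$). For a rotation $a^{i}$ and a reflection $a^{j}b$, one gets $x\circ y=\{a^{i+j}b\}\cup F$ while $y\circ x=\{a^{i+j}b,a^{j-i}b\}\cup F$, forcing $a^{j-i}b\in\{a^{i+j}b,\,b\}$; this is equivalent to $i\equiv 0\pmod{n/2}$ or $j=i$. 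For two reflections $a^{i}b,\,a^{j}b$, a direct computation gives $xy=xy^{-1}=a^{i-j}$, reducing the analysis to the rotation--rotation case and producing the reflection neighbours $a^{i\pm 1}b,\,a^{i\pm 2}b,\,a^{i+n/2}b$.

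Assembling these neighbour lists yields the stated degrees. A generic $a^{i}$ (with $i\neq n/2,n$) has the five rotation neighbours $a^{i\pm 1},\,a^{i\pm 2},\,a^{i+n/2}$ together with the single reflection neighbour $a^{i}b$, for a total of $6$; when $i\in\{n/2,n\}$ the condition $i\equiv 0\pmod{n/2}$ is satisfied, so $a^{i}$ is joined to \emph{every} one of the $n$ reflections, giving $5+n=n+5$. For $a^{i}b$ the five reflection neighbours are always present; the rotation neighbours are $\{a^{n/2},a^{n},a^{i}\}$ in the generic case $i\neq n/2,n$ and collapse to $\{a^{n/2},a^{n}\}$ when $i\in\{n/2,n\}$, yielding $8$ and $7$ respectively.

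The only genuine obstacle is bookkeeping in the two boundary values $i=n/2,n$, where identities such as $a^{-n/2}=a^{n/2}$ or $a^{n}=1$ threaten to merge the five candidate offsets $\{\pm 1,\pm 2,n/2\}$ or to identify $a^{i}$ with one of $a^{n/2},a^{n}$. The hypothesis $n\geq 6$ is precisely what makes these five offsets pairwise distinct modulo $n$, so no further collapse occurs and the counts above are exact.
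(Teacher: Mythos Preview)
Your argument is correct and follows the same route as the paper: determine, by direct computation with the hyperoperation, exactly which pairs $(a^i,a^j)$, $(a^i,a^jb)$, $(a^ib,a^jb)$ commute, and then count. The paper's own proof merely asserts the resulting neighbour lists without justification, so your version is in fact more complete; in particular, your final paragraph correctly isolates why the hypothesis $n\geq 6$ is needed to keep the five offsets $\pm 1,\pm 2,n/2$ distinct and nonzero.
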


\begin{proof}
(1) For an even integer $n\geq 6$, each $a^{i}$ commutes with
$a^{i+1}$, $a^{i-1}$, $a^{i+2}$, $a^{i-2}$, $a^{\frac{n}{2}+i}$.
Also $a^{i}$ commutes with $a^{i}b$ if $i\neq n,$ $\frac{n}{2}$
whereas $e,a^{\frac{n}{2}}$ commute with $a^{i}b$ for all
$i\in\left\{1,2,\ldots,n\right\}$.

(2) Each $a^{i}b$ commutes with $a^{i+1}b$, $a^{i-1}b$,
$a^{i+2}b$, $a^{i-2}b$, $a^{\frac{n}{2}+i}b$, $a^{i}$, $e$, and
$a^{\frac{n}{2}}$. Therefore, $\deg _{G}(a^{i}b) =8$ if  $i\neq
n,$ $\frac{n}{2}$ and $\deg _{G}(a^{i}b) =7$ if  $i= n,$
$\frac{n}{2}$.
\end{proof}

\begin{theorem}
\label{L2}Let $H^{\natural }= (D_{2n},\circ)$ be an $H_v$-group
for an odd integer $n\geq 5$ and $G=C(H^{\natural },H^{\natural})$
be a commuting graph. Then
\begin{itemize}
\item[(1)] $\deg _{G}( a^{i}) =\left\{\begin{tabular}{ll}
$5$ & \text{if} \ $i\neq n$ \\
$n+4$ & \text{if} \ $i=n$,
\end{tabular}
\right. $ \item[(2)] $\deg _{G}( a^{i}b)
=\left\{\begin{tabular}{ll}
$6$ & \text{if} \ $i\neq n$ \\
$5$ & \text{if} \ $i=n$\,.
\end{tabular}
\right. $
\end{itemize}
\end{theorem}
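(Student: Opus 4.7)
The plan is to mirror the argument used in Theorem~\ref{L1}, with the crucial modification that for odd $n$ the element $a^{n/2}$ does not exist in $D_{2n}$; consequently the extra commuting pair $a^i \leftrightarrow a^{n/2+i}$ and the extra central-type reflection neighbor disappear, which is exactly what accounts for the decrease of each degree by one in the statement. First I would revisit the preliminary analysis of $a^i\circ a^j$ and $a^j\circ a^i$ given in equations (\ref{equ:2})--(\ref{equ:5}): the derivation there shows that $a^i$ commutes with $a^{i\pm 1}$ and, by the analogous computation for $j=i\pm 2$, also with $a^{i\pm 2}$, and this argument goes through verbatim for odd $n$. Since $n\geq 5$, the four rotations $a^{i\pm 1},a^{i\pm 2}$ are pairwise distinct and distinct from $a^i$, so each contributes a separate neighbor of $a^i$ in $G$.

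Next I would analyse the mixed compositions $a^i\circ a^j b$ and $a^i b\circ a^j b$ using the defining relation (\ref{1}). The computation should show that $a^i$ commutes with $a^i b$ precisely when $i\neq n$, that $e=a^n$ commutes with every reflection $a^i b$, and that $a^i b$ commutes with $a^j b$ if and only if $j\in\{i\pm 1,i\pm 2\}$. Collecting these contributions gives the claimed counts: for $i\neq n$, $\deg_G(a^i)=4+1=5$, coming from $\{a^{i\pm 1},a^{i\pm 2}\}$ together with $a^i b$; for $i=n$, $\deg_G(e)=4+n$ because $e$ is additionally joined to every reflection. Similarly, for $i\neq n$ one obtains $\deg_G(a^i b)=4+2=6$, from the reflections $a^{(i\pm 1)}b,a^{(i\pm 2)}b$ together with $a^i$ and $e$; for $i=n$ the two would-be neighbors $a^i$ and $e$ coincide, so the degree drops to $5$.

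The main obstacle will be purely combinatorial bookkeeping. I must verify that no additional commuting pairs arise from hidden coincidences in the hyperoperation (\ref{1}), that the enumerated neighbors are pairwise distinct for every odd $n\geq 5$ (in particular, $a^{i\pm 2}$ never coincides with $a^{i\pm 1}$ or with $a^i$, which is where the hypothesis $n\geq 5$ is actually used), and that the short-range $\pm 1,\pm 2$ pattern is not supplemented by any longer-range symmetry that was present for even $n$ but fails for odd $n$. Once this verification is carried out, the stated degree formulas follow immediately by summing the contributions listed above.
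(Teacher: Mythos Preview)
Your proposal is correct and follows exactly the approach the paper intends: the paper's own proof consists of the single sentence ``Relations (1) and (2) follow by straightforward calculations,'' implicitly referring back to the case analysis carried out for Theorem~\ref{L1}. Your plan to redo that analysis with the $a^{n/2}$-terms removed, and your explicit bookkeeping of the resulting neighbor sets, is precisely the computation the paper leaves to the reader---so your write-up is in fact more detailed than the original.
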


\begin{proof}
Relations (1) and (2) follow by straightforward calculations.
\end{proof}

Now, we present some algorithms to construct NSSD graphs from
these commuting graphs. These algorithms are written in the $GAP$
language.

\begin{algorithm}
\caption{Dihedral Group}
{ \bf Input} : n

{ \bf Output} : Dihedral group of order $2n$
\begin{enumerate}

\item  f := FreeGroup( "a", "b" );;
\item  g := $f / [ f.1^{n}, f.2^{2}, (f.1*f.2)^{2} ];$
\item  Unbind(a);
\item  a := g.1;; b := g.2;;  assign variables

\end{enumerate}
\end{algorithm}
Algorithm $(1)$ gives us dihedral group of order $2n$. Here in
this algorithm we have to give the input value of $n$ and we get
the dihedral group of order $2n$. Now, we give an algorithm to
define the hyperoperation given in Eq. $(\ref{1})$. This Algorithm $(2)$
gives us the product of two elements under the
hyperopeation defined in Eq. $(\ref{1})$.

\vspace{10mm}

\begin{algorithm}
\caption{Hyperoperation}
{ \bf Input} : two elements $x,y \in g$

{ \bf Output} : The image of (x,y) under the hyperoperation $"\circ"$, $i.e.,$ $x\circ y$.
\begin{enumerate}

\item  H := The function of $(x,y)$
\item  Define the local variable $"\circ"$
\item if x in g and y in g then
\item  $\circ$ := The hyperoperation defined as in Eq. $(\ref{1})$;
\item  fi; \text{ \ \ \ } return $\circ$; \text{ \ \ \ } end;

\end{enumerate}
\end{algorithm}

\vspace{10mm}

\begin{algorithm}
\caption{Adjacency Matrix}
{ \bf Input} : Any subset $U$ of this $H_{v}$-group

{ \bf Output} : The adjacency matrix for the commuting graph of $U$.
\begin{enumerate}

\item  T := function(U)
\item  local S, M, n, i, j, k;
\item  n is the order of the subset $U$;
\item  M is the identity matrix of order $n$;
\item  for i in [1..n-1] do
\item  for j in [i+1..n] do
\item  if the elements at $ith$ and $jth$ position in $U$ commutes then
\item  M[i][j] := 1;
\item  M[j][i] := 1; \text{ \ \ \ } fi;
\item  od; \text{ \ \ \ } od;
\item  for k in [1..n] do
\item  M[k][k] := 0;
\item  od; \text{ \ \ \ } return M; \text{ \ \ \ } end;

\end{enumerate}
\end{algorithm}

Algorithm $(3)$ presents the pseudo-code for the adjacency matrix
of a commuting graph $G=C(H^{\natural },U)$. Here subset $U$ is
the input value and the output value is the adjacency matrix for
the commuting graph of $U$. Now, the following algorithm $(4)$
shows that wether the commuting graph is NSSD graph or not.

\vspace{10mm}

\begin{algorithm}
\caption{NSSD Graph}
{ \bf Input} : An adjacency matrix

{ \bf Output} : Is corresponding graph NSSD graph or not.
\begin{enumerate}

\item  RemRowCol := function(M,c)
\item  local A, i; \text{ \ \ \ } A := StructuralCopy(M);
\item  for i in [1..Length(A)] do
\item  Remove $ith$ Row and $ith$ Column of matrix $M$;
\item  od; \text{ \ \ \ } return A; \text{ \ \ \ } end;
\item  IsNSSD := function(M) \text{ \ \ \ } local A, eig, eigsp, c, i, j;
\item  $"c"$ is the counter;
\item  $"eig"$ are the Eigenvalues of $M$;
\item  if 0 is an eigenvalue of $M$ then \text{ \ \ \ } return false;
\item     else c := c+1; \text{ \ \ \ } fi;
\item  for i in [1..Length(M)] do
\item  $"A"$ is the matrix obtained by deleting $ith$ Row and $ith$ Column of $M$;
\item  $"eigsp"$ are the Eigenvalues of $A$;
\item  if 0 is an eigenvalue of $A$ then \text{ \ \ \ } c := c+1; \text{ \ \ \ } fi;
\item  od; \text{ \ \ \ } if c=Length(M)+1 then
\item     return true; \text{ \ \ \ } else return false;
\item  fi; \text{ \ \ \ } end;

\end{enumerate}
\end{algorithm}
Algorithm $(4)$ is the pseudo-code for NSSD graph. In this
algorithm the input value is the adjacency matrix of a commuting
graph and it returns true if the corresponding graph is NSSD graph
otherwise it returns false. Using these algorithms present in this
paper, we can find the NSSD graphs. For example, if we consider
the dihedral group for an integer $n=4$ and define the
hyperoperation using algorithm $(2)$. Now, consider the subset $U
= \{ a, a^{3}, b, ab \}$ of the $H_{v}$-group $H^{\natural }=
(D_{8},\circ)$ and find the adjacency matrix for the commuting
graph $G=C( H^{\natural },U)$ using algorithm $(3)$, we get
\begin{center}
 $M=\begin{bmatrix}
    0 & 1 & 0 & 1 \\
    1 & 0 & 0 & 0 \\
    0 & 0 & 0 & 1 \\
    1 & 0 & 1 & 0 \\
 \end{bmatrix}$.
 \end{center}
When we use algorithm $(4)$ to check wether it is NSSD graph or not, it returns $"true"$. The corresponding graph is depicted in Fig. 1.

\begin{center}
\includegraphics[height=1.3cm,keepaspectratio]{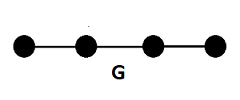}

\baselineskip=0.20in

\vspace{3mm}

{\bf Fig. 1.} NSSD graphs.
\end{center}

Now, consider the $H_{v}$-group $H^{\natural }= (D_{2n},\circ)$, for an integer $n \geq 2$. We present the following table that contains the number of NSSD graphs, obtained from the commuting graphs of the $H_{v}$-group $H^{\natural }= (D_{2n},\circ)$, with the help of above algorithms.

\vspace{5mm}

\begin{center}
\begin{tabular}{l|l|l|l}
 { \bf n } & { \bf Order of } & { \bf No. of subsets who's } & { \bf No. of } \\
 & { \bf graph } & { \bf commuting graph is NSSD } & { \bf NSSD graphs } \\ \hline
2 & \text{ \ \ \ \ \ }2 & \text{ \ \ \ \ \ \ \ \ \ \ \ \ \ \ \ }6 & \text{ \ \ \ \ \ \ \ }1 \\ \hline
3 & \text{ \ \ \ \ \ }2 & \text{ \ \ \ \ \ \ \ \ \ \ \ \ \ \ \ }11 & \text{ \ \ \ \ \ \ \ }1 \\
 & \text{ \ \ \ \ \ }4 & \text{ \ \ \ \ \ \ \ \ \ \ \ \ \ \ \ }2 & \text{ \ \ \ \ \ \ \ }1 \\ \hline
4 & \text{ \ \ \ \ \ }2 & \text{ \ \ \ \ \ \ \ \ \ \ \ \ \ \ \ }22 & \text{ \ \ \ \ \ \ \ }1 \\
 & \text{ \ \ \ \ \ }4 & \text{ \ \ \ \ \ \ \ \ \ \ \ \ \ \ \ }5 & \text{ \ \ \ \ \ \ \ }2 \\ \hline
5 & \text{ \ \ \ \ \ }2 & \text{ \ \ \ \ \ \ \ \ \ \ \ \ \ \ \ }29 & \text{ \ \ \ \ \ \ \ }1 \\
 & \text{ \ \ \ \ \ }4 & \text{ \ \ \ \ \ \ \ \ \ \ \ \ \ \ \ }54 & \text{ \ \ \ \ \ \ \ }2 \\ \hline
6 & \text{ \ \ \ \ \ }2 & \text{ \ \ \ \ \ \ \ \ \ \ \ \ \ \ \ }46 & \text{ \ \ \ \ \ \ \ }1 \\
 & \text{ \ \ \ \ \ }4 & \text{ \ \ \ \ \ \ \ \ \ \ \ \ \ \ \ }84 & \text{ \ \ \ \ \ \ \ }2 \\ \hline
7 & \text{ \ \ \ \ \ }2 & \text{ \ \ \ \ \ \ \ \ \ \ \ \ \ \ \ }41 & \text{ \ \ \ \ \ \ \ }1 \\
 & \text{ \ \ \ \ \ }4 & \text{ \ \ \ \ \ \ \ \ \ \ \ \ \ \ \ }262 & \text{ \ \ \ \ \ \ \ }2 \\
 & \text{ \ \ \ \ \ }6 & \text{ \ \ \ \ \ \ \ \ \ \ \ \ \ \ \ }374 & \text{ \ \ \ \ \ \ \ }7 \\
 & \text{ \ \ \ \ \ }8 & \text{ \ \ \ \ \ \ \ \ \ \ \ \ \ \ \ }130 & \text{ \ \ \ \ \ \ \ }15 \\
 & \text{ \ \ \ \ \ }10 & \text{ \ \ \ \ \ \ \ \ \ \ \ \ \ \ \ }4 & \text{ \ \ \ \ \ \ \ }1 \\ \hline
8 & \text{ \ \ \ \ \ }2 & \text{ \ \ \ \ \ \ \ \ \ \ \ \ \ \ \ }62 & \text{ \ \ \ \ \ \ \ }1 \\
 & \text{ \ \ \ \ \ }4 & \text{ \ \ \ \ \ \ \ \ \ \ \ \ \ \ \ }409 & \text{ \ \ \ \ \ \ \ }2 \\
 & \text{ \ \ \ \ \ }6 & \text{ \ \ \ \ \ \ \ \ \ \ \ \ \ \ \ }416 & \text{ \ \ \ \ \ \ \ }7 \\
 & \text{ \ \ \ \ \ }8 & \text{ \ \ \ \ \ \ \ \ \ \ \ \ \ \ \ }80 & \text{ \ \ \ \ \ \ \ }11 \\
 & \text{ \ \ \ \ \ }10 & \text{ \ \ \ \ \ \ \ \ \ \ \ \ \ \ \ }4 & \text{ \ \ \ \ \ \ \ }1
\end{tabular}
\captionof{table}{Number of NSSD graphs for different values of
$n$.}\label{tab1}
\end{center}
By using Table \ref{tab1}, we calculate the number of NSSD graphs
of different orders for different values of $n$. In addition, we
compute the number of subsets, who's commuting graph is NSSD.
Similarly, from these commuting graphs one can find more NSSD
graphs of higher order by choosing greater value of $n$. Now, we
present some NSSD molecular graphs obtained from these commuting
graphs.


\section{NSSD Molecular Graphs}

As mentioned in previous section, NSSD graphs are encountered
within a theory of conductivity of organic substances
\cite{new1,14}. In view of this, it is of particular interest to
design NSSD graph that are molecular graphs, i.e., graphs whose
vertices and edges pertain to carbon atoms and carbon--carbon
bonds, respectively \cite{17,18a,new3}. Hyperstructure theory has been
earlier much used in the chemistry, see \cite{21,21a,21b,21c}. In
this section our main purpose is to construct NSSD molecular
graphs from the above described commuting graphs. The following
theorems related to construct NSSD graphs from the commuting
graphs of a non-abelian group $\Omega$.

\begin{theorem}
Let $G_{1} = C(\Omega, U)$ and $G_{2} = C(\Omega, V)$ be two commuting graphs, such that $G_{2}$ is an empty graph and $|G_{1}| = |G_{2}|$. If each element of $V$ commutes with exactly one element of $U$, then the commuting graph $G^{\prime} = C(\Omega, U \cup V)$ is an NSSD graph.
\end{theorem}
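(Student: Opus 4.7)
My approach is to put the adjacency matrix of $G'$ into block form determined by the bipartition $U\cup V$ and then verify the two NSSD conditions by elementary linear algebra.

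Setup. Let $|U|=|V|=n$. Since $G_2$ is empty, no two vertices of $V$ commute, and the hypothesis gives a well-defined map $f\colon V\to U$ sending each $v$ to its unique commuting partner in $U$. I would first argue that $f$ must in fact be a bijection: if two distinct $v,v'\in V$ shared the same image $f(v)=f(v')=u$, then in $G'$ the rows of the adjacency matrix corresponding to $v$ and $v'$ would both equal $e_u^{\top}$ (padded by zeros), which forces $0$ to be an eigenvalue and contradicts the non-singularity required for NSSD. Hence $f$ is injective, and since $|U|=|V|$ it is a bijection. After relabelling $U=\{u_1,\dots,u_n\}$, $V=\{v_1,\dots,v_n\}$ so that $f(v_i)=u_i$, the adjacency matrix of $G'$ becomes
\[
A=\begin{pmatrix} A_1 & I_n \\ I_n & 0_n \end{pmatrix},
\]
where $A_1$ is the adjacency matrix of $G_1$.

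Non-singularity of $G'$. Subtracting $A_1$ times the second block of rows from the first block (a determinant-preserving operation) reduces $A$ to
\[
\begin{pmatrix} 0 & I_n \\ I_n & 0_n \end{pmatrix},
\]
whose determinant is $(-1)^n\neq 0$. Thus $\det A\neq 0$ regardless of the internal structure of $G_1$.

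Singular deck. I split the vertex deletions into two cases. If $u_k\in U$ is deleted, then the unique partner $v_k$ loses its only neighbor and becomes isolated in $G'-u_k$, so the adjacency matrix has a zero row and column and $0$ is an eigenvalue. If instead $v_k\in V$ is deleted, the remaining adjacency matrix has the form
\[
A'=\begin{pmatrix} A_1 & B' \\ (B')^{\top} & 0 \end{pmatrix},
\]
where $B'$ is $I_n$ with its $k$-th column removed. I would exhibit a non-zero vector in $\ker A'$ by taking $x=(x_1,x_2)$ with $x_1=e_k$ and $x_2$ chosen so that $B'x_2=-A_1 e_k$. The bottom block of equations $(B')^{\top}x_1=0$ is then automatic, because the rows of $(B')^{\top}$ are exactly $e_i^{\top}$ for $i\neq k$; and $x_2$ can indeed be chosen because $A_1 e_k$ has a zero $k$-th coordinate (the diagonal of $A_1$ vanishes), so it lies in the column space of $B'=\mathrm{span}\{e_i:i\neq k\}$.

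Main obstacle. The one genuinely delicate point is justifying that $f$ is a bijection from the stated hypothesis, since the wording only constrains elements of $V$; without it, the two-identical-rows argument shows $G'$ is already singular and the conclusion fails. Once the block form $A=\bigl(\begin{smallmatrix} A_1 & I\\ I & 0\end{smallmatrix}\bigr)$ is secured, both the determinant computation and the null-vector constructions are short and essentially independent of the specific structure of $A_1$, using only the fact that its diagonal is zero.
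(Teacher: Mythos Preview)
Your block-matrix computation is correct and genuinely different from the paper's argument, but your justification that $f$ is a bijection is circular: you deduce injectivity by saying a failure would make $G'$ singular and hence ``contradict the non-singularity required for NSSD,'' yet non-singularity of $G'$ is precisely what you are trying to establish. As you yourself note in the final paragraph, the stated hypothesis constrains only elements of $V$, and if two pendants share a neighbour then $G'$ really does have two identical rows and the conclusion is false. So this is not a gap you can repair from inside the proof---it is a missing hypothesis in the theorem. The paper's own proof carries exactly the same hidden assumption (its pendant-stripping reduces $G'$ to a single edge only when distinct pendants have distinct neighbours) but does not acknowledge it, so on this point you are at least as rigorous as the original and arguably more honest.

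Setting the bijection issue aside, the two arguments take quite different routes. The paper repeatedly applies the pendant-vertex identity $\eta(G')=\eta(G'-v-u)$ to strip off pendants until only a single edge (for $G'$) or a single vertex (for $G'-x$ with $x\in V$) remains, and notes an isolated vertex when $x\in U$. You instead write the adjacency matrix as $\bigl(\begin{smallmatrix}A_1 & I\\ I & 0\end{smallmatrix}\bigr)$, row-reduce to get the determinant, and construct an explicit kernel vector for each deletion. Your version is self-contained (no external nullity lemma needed) and makes it transparent that the NSSD property is entirely independent of the internal structure of $G_1$---indeed of whether $G_1$ is a commuting graph at all; the paper's version is terser but leans on the cited identity.
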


\begin{proof}
Since each element of the subset $V$ commutes with exactly one
element of $U$ and $G_{2}$ is an empty graph, it follows that each
vertex in $V$ is a pendent vertex of the commuting graph
$G^{\prime}$. If $v$ is a pendent vertex of a graph $G^{\prime}$,
adjacent to the vertex $u$, then \cite{6a, 6b}
\begin{equation}
\eta(G^{\prime}) = \eta(G^{\prime} -v-u). \label{eq 1}
\end{equation}
If we apply Eq. $(\ref{eq 1})$ to each but one pendent vertex of
the graph $G^{\prime}$, then we get a connected graph with two
vertices. Therefore, nullity of the graph $G^{\prime}$ is zero. So
$G^{\prime}$ is a non-singular graph.

Now, consider the vertex deleted subgraph $G^{\prime}-x$. If $x
\in V$, then $x$ is a pendent vertex, so applying the Eq.
$(\ref{eq 1})$ to each pendent vertex of the graph $G^{\prime}-x$,
we obtain a graph with single vertex. Therefore, the nullity of
$G^{\prime}-x$ is $1$. If $x \in U$, then there exists an isolated
vertex of the graph $G^{\prime}-x$, Therefore, the nullity of
$G^{\prime}-x$ is $1$. Hence $G^{\prime}$ is a non-singular graph
with a singular deck.
\end{proof}

\begin{theorem}
If the commuting graphs $G_{1}= C(\Omega, U)$ and $G_{2}= C(\Omega, V)$ are two NSSD graphs, such that there exists exactly one element $u \in U$ that commutes with exactly one element $v \in V$, then the commuting graph $G^{\prime}= C(\Omega, T)$ is an NSSD graph, where $T = U \cup V$.
\end{theorem}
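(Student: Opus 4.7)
The plan is to exploit the fact that under the hypothesis $uv$ is the \emph{only} edge of $G^{\prime}$ joining $U$ to $V$, and is therefore a bridge of $G^{\prime}$: removing it splits the graph into the disjoint union $G_1\sqcup G_2$, and no cycle of $G^{\prime}$ passes through $uv$. For a bridge, the standard edge-deletion (Schwenk) formula for the characteristic polynomial collapses to the two-term identity
$$\phi(G^{\prime},x) \;=\; \phi(G_1,x)\,\phi(G_2,x)\;-\;\phi(G_1-u,x)\,\phi(G_2-v,x),$$
and setting $x=0$, together with $\phi(H,0)=(-1)^{|V(H)|}\det A(H)$, converts this into the determinant identity
$$\det A(G^{\prime}) \;=\; \det A(G_1)\det A(G_2)\;-\;\det A(G_1-u)\det A(G_2-v). \qquad(\ast)$$
The rest of the proof will be read off directly from $(\ast)$ and its analogues for vertex-deleted subgraphs.

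First I would handle non-singularity of $G^{\prime}$. Since $G_1$ and $G_2$ are NSSD, $\det A(G_i)\neq 0$ while $\det A(G_1-u)=\det A(G_2-v)=0$. The cross term in $(\ast)$ therefore vanishes, so $\det A(G^{\prime})=\det A(G_1)\det A(G_2)\neq 0$ and $\eta(G^{\prime})=0$.

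Next I would verify the singular-deck condition by a short case split on the deleted vertex $x$. When $x=u$ the bridge is destroyed and $G^{\prime}-u=(G_1-u)\sqcup G_2$, so $\det A(G^{\prime}-u)=\det A(G_1-u)\det A(G_2)=0$ by the NSSD deck of $G_1$; the case $x=v$ is symmetric. When $x\in U\setminus\{u\}$ the bridge $uv$ still lies between $G_1-x$ and $G_2$, so I would re-apply $(\ast)$ with $G_1$ replaced by $G_1-x$; each of the two resulting terms contains one of $\det A(G_1-x)$ or $\det A(G_2-v)$, both of which are zero (from the NSSD decks of $G_1$ and $G_2$, respectively). The case $x\in V\setminus\{v\}$ is again symmetric.

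The only subtle point, and the step I expect to require the most care in writing up, is justifying the bridge-edge formula for $\phi(G^{\prime},x)$: it should either be cited from a spectral-graph-theory reference (in the same spirit as the pendant-vertex nullity identity used in the previous theorem) or established on the spot by a Laplace expansion of $\det(xI-A(G^{\prime}))$ along the row and column indexed by $u$, using that the only nonzero off-block entry is $A(G^{\prime})_{uv}=1$. Once $(\ast)$ is in hand, every remaining step is a mechanical substitution of the NSSD hypotheses on $G_1$ and $G_2$.
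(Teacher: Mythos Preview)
Your proposal is correct and follows essentially the same route as the paper: both invoke the bridge (Schwenk) identity $\phi(G',\lambda)=\phi(G_1,\lambda)\phi(G_2,\lambda)-\phi(G_1-u,\lambda)\phi(G_2-v,\lambda)$, evaluate at $\lambda=0$ to get non-singularity of $G'$, and then run the same case split on the deleted vertex (re-applying the identity with $G_1$ replaced by $G_1-x$ when $x\in U\setminus\{u\}$). The paper simply cites the identity from the literature, whereas you also sketch how to derive it; otherwise the arguments coincide.
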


\begin{proof}
Clearly,  $G^{\prime}$ is obtained by joining a vertex $u \in
G_{1}$ with a vertex $v \in G_{2}$. The following relation gives
the characteristic polynomial \cite{new3,18a}
\begin{equation}
P(G^{\prime}, \lambda) = P(G_{1}, \lambda) P(G_{2}, \lambda) - P(G_{1}-u, \lambda)P(G_{2}-v, \lambda). \label{eq 2}
\end{equation}
Both graphs $G_{1}$ and $G_{2}$ are NSSD graphs, so they are
non-singular, i.e., $P(G_{1}, 0) \neq 0$, $P(G_{2},0) \neq 0$.
Moreover, each vertex deleted subgraph is singular, so $P(G_{1}-u,
0) = 0$ and $P(G_{2}-v, 0) = 0$. Consequently, we get
$P(G^{\prime}, 0) \neq 0$, and this implies that $G^{\prime}$ is
non-singular. Now, consider the vertex deleted subgraph
$G^{\prime} - x$. If $x = u$, then $G^{\prime} - u$ is singular,
because $G_{1}-u$ is singular. Similarly, if $x = v$, then the
subgraph $G^{\prime} - v$ is singular. Let $x \in T$, such that $x
\neq u, v$. Assume that $x \in U$, then from Eq. $(\ref{eq 2})$
\begin{equation*}
P(G^{\prime}-x, \lambda) = P(G_{1}-x, \lambda) P(G_{2}, \lambda) - P(G_{1}-x-u), \lambda) P(G_{2}-v, \lambda).
\end{equation*}
We have $P(G_{1}-x, 0) = 0$, because $G_{1}$ is an NSSD graph.
Therefore, $P(G^{\prime}-x, 0) = 0$, which shows that the subgraph
$G^{\prime}-x$ is singular. Hence $G^{\prime}$ is an NSSD graph.
\end{proof}

Now, using these results and the algorithms presented in section
$(2)$, we construct the NSSD molecular graphs from the commuting
graphs of the $H_{v}$-group $H^{\natural}$. Consider the
$H_v$-group $H^{\natural}=(D_{2n},\circ)$, where $D_{2n}$ is the
dihedral group for $n=16$ and $\circ$ is the hyperoperation
defined as in Eq. $(\ref{1})$. In addition, define the following
sets of vertices for which the commuting graphs give NSSD
molecular graphs with 2, 4, and 6 vertices:
\begin{eqnarray*}
\Gamma _{1} &=&\left\{a,a^{3}\right\}, \\
\Gamma _{2} &=&\left\{a^{5},a^{7},a^{8},a^{10}\right\}, \\
\Gamma _{3} &=&\left\{a,a^{3},a^{5},a^{7},a^{8},a^{10}\right\}, \\
\Gamma _{4} &=&\left\{a,a^{3},a^{5},a^{6},a^{5}b,a^{7}b\right\}, \\
\Gamma _{5} &=&\left\{a^{2},a^{3},a^{5},a^{7},a^{9},a^{10}\right\}, \\
\Gamma _{6} &=&\left\{a,a^{2},a^{10},ab,a^{2}b,a^{4}b\right\}, \\
\Gamma _{7} &=&\left\{a,a^{2},a^{3},a^{5},a^{15},a^{2}b\right\}.
\end{eqnarray*}
Here the commuting graphs $G_i=C(H^{\natural},\Gamma _i) \ , \
i=1,2,\ldots,7$, result in the following NSSD graphs.

\vspace{3mm}

\begin{center}
\includegraphics[height=4cm,keepaspectratio]{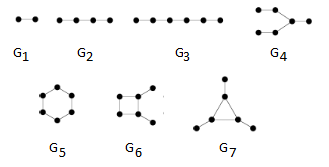}

\baselineskip=0.20in

\vspace{4mm}

{\bf Fig. 2.} NSSD graphs with 2, 4, and 6 vertices.
\end{center}

\vspace{4mm}

\baselineskip=0.30in For instance, we specify the construction of
the graph $G_3$, whose vertex set is $\Gamma_{3}$. Since the
graphs $G_{1}$, $G_{2}$ are NSSD and only one element $a^{3} \in
\Gamma_{1}$ commutes with exactly one element $a^{5} \in
\Gamma_{2}$. So, the commuting graph corresponding to $\Gamma
_{3}=\Gamma _{1}\cup \Gamma _{2}$ is NSSD. Thus, the graph $G_3$
is the path of the form $a-a^{3}-a^{5}-a^{7}-a^{8}-a^{10}$. In an
analogous manner, one can establish the structure of the remaining
graphs from the set $\{\Gamma_i \ | \ i=1,2,\ldots,7\}$. One can
determine these graphs using algorithm $(4)$. We now list the sets
of vertices for which the commuting graphs yield NSSD graphs with
8 vertices.
\begin{eqnarray*}
\Gamma _{8} &=&\left\{a,a^{2},a^{3},a^{5},a^{15},a^{2}b,a^{5}b,a^{7}b\right\}, \\
\Gamma _{9} &=&\left\{a,a^{3},a^{5},a^{7},a^{9},a^{10},a^{3}b,a^{5}b\right\}, \\
\Gamma _{10} &=&\left\{a^{5},a^{6},a^{5}b,a^{6}b,a^{13}b,a^{14}b,a^{12}b,a^{12}\right\}, \\
\Gamma _{11} &=&\left\{a^{2},a^{3},a^{5},a^{7},a^{9},a^{10},a^{2}b,a^{7}b\right\}, \\
\Gamma _{12}
&=&\left\{a,a^{3},a^{5},a^{7},a^{9},a^{10},ab,a^{7}b\right\}.
\end{eqnarray*}
For these sets of vertices the commuting graphs $G_{i}=C(H^{\natural},\Gamma _i) \ , \
i=8,9,\ldots,12$, are the following NSSD graphs.

\vspace{3mm}

\begin{center}
\includegraphics[height=5cm,keepaspectratio]{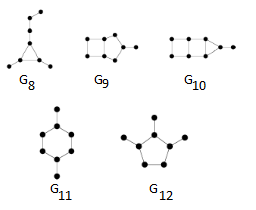}

\baselineskip=0.20in

\vspace{6mm}

{\bf Fig. 3.} NSSD graphs with 8 vertices.
\end{center}

\vspace{3mm}

\baselineskip=0.30in

For instance, $\Gamma _{8} =\left\{a,a^{2},a^{3},a^{5},a^{15},a^{2}b,a^{5}b,a^{7}b\right\}$
is the set of vertices for the commuting graph $G_{8}$. Since the commuting graphs of the subsets $\Gamma _{7}$ and $\Gamma^{\prime} = \{a^{5}b, a^{7}b\}$ NSSD. Also there exists only one element $a^{5}b \in \Gamma^{\prime}$ that commutes with only one element $a^{2}b \in \Gamma _{7}$. Thus the commuting graph of the subset $\Gamma _{8}=\Gamma _{7} \cup \Gamma^{\prime}$ is NSSD. These structural features fully determine the NSSD graph $G_{8}$. The other graphs can be analysed and determined in a similar manner.

The following sets of vertices pertain to commuting graphs resulting in NSSD
molecular graphs with 10 vertices.
\begin{eqnarray*}
\Gamma _{13} &=&\left\{a,a^{3},a^{5},a^{7},a^{9},a^{10},ab,a^{5}b,a^{7}b,a^{13}b\right\}, \\
\Gamma _{14} &=&\left\{a,a^{3},a^{5},a^{7},a^{9},a^{10},a^{3}b,a^{5}b,a^{11}b,a^{13}b\right\}, \\
\Gamma _{15} &=&\left\{a^{2},a^{3},a^{5},a^{6},a^{7},a^{9},a^{10},a^{2}b,a^{3}b,a^{14}\right\}, \\
\Gamma _{16} &=&\left\{a^{5},a^{6},a^{7},a^{9},a^{12},a^{5}b,a^{6}b,a^{13}b,a^{14}b,a^{12}b\right\}, \\
\Gamma _{17} &=&\left\{a^{2},a^{3},a^{5},a^{7},a^{9},a^{10},a^{12},a^{13},a^{15},a^{12}b\right\}, \\
\Gamma _{18}
&=&\left\{a,a^{3},a^{4},a^{6},a^{7},a^{9},a^{11},a^{13},a^{14},a^{14}b\right\}.
\end{eqnarray*}
Consider now the $H_{v}$-group $H^{\natural}= (D_{2n},\circ)$,
where $D_{2n}$ is the dihedral group for $n=20$ and $\circ$ is the
hyperoperation, and define the following set of vertices of this
$H_{v}$-group for which the commuting graphs give NSSD graphs with
10 vertices.
\begin{eqnarray*}
\Gamma _{19} &=&\left\{a^{3},a^{5},a^{7},a^{9},a^{11},a^{12},a^{14},a^{15},a^{9}b,a^{11}b\right\}, \\
\Gamma _{20} &=&\left\{a,a^{2},a^{3},a^{5},a^{6},a^{7},a^{9},a^{12},ab,a^{6}b\right\}, \\
\Gamma _{21} &=&\left\{a,a^{2},a^{3},a^{5},a^{6},a^{8},a^{19},a^{2}b,a^{5}b,a^{6}b\right\}, \\
\Gamma _{22} &=&\left\{a,a^{2},a^{4},a^{5},a^{11},a^{2}b,a^{3}b,a^{5}b,a^{11}b,a^{12}b\right\}, \\
\Gamma _{23} &=&\left\{a^{3},a^{5},a^{7},a^{9},a^{11},a^{12},a^{14},a^{15},a^{19},a^{7}b\right\}, \\
\Gamma _{24}
&=&\left\{a^{3},a^{5},a^{7},a^{9},a^{11},a^{12},a^{14},a^{15},a^{7}b,,a^{12}b\right\}.
\end{eqnarray*}
The commuting graphs $G_i=C(H^{\natural},\Gamma_i) , \
i=13,14,\ldots,24$, lead to the NSSD graphs depicted in Figs. 4
and 5.

\vspace{10mm}

\begin{center}
\includegraphics[height=5cm,keepaspectratio]{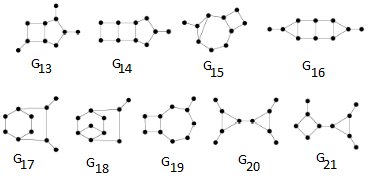}

\baselineskip=0.20in

\vspace{6mm}

{\bf Fig. 4.} NSSD graphs with 10 vertices.
\end{center}

\begin{center}
\includegraphics[height=4cm,keepaspectratio]{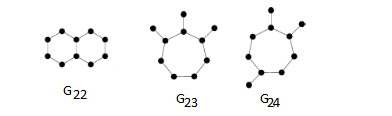}

\baselineskip=0.20in

\vspace{6mm}

{\bf Fig. 5.} NSSD graphs with 10 vertices.
\end{center}

\vspace{5mm}

\baselineskip=0.30in
In order to construct NSSD graphs with 12
vertices from these commuting graphs, consider the $H_{v}$-group
$H^{\natural}= (D_{2n},\circ)$, for $n=20$ and define the
following sets of vertices.

\vspace{3mm}

\begin{eqnarray*}
\Gamma _{25} &=&\left\{a,a^{2},a^{3},a^{5},a^{19},a^{2}b,a^{5}b,a^{7}b,a^{9}b,a^{11}b,a^{13}b,a^{14}b\right\}, \\
\Gamma _{26} &=&\left\{a,a^{2},a^{3},a^{5},a^{6},a^{8},a^{16},a^{19},a^{2}b,a^{8}b,a^{9}b,a^{11}b\right\}, \\
\Gamma _{27} &=&\left\{a,a^{2},a^{3},a^{5},a^{6},a^{8},a^{16},a^{19},a^{2}b,a^{8}b,a^{9}b,a^{16}b\right\}, \\
\Gamma _{28} &=&\left\{a,a^{2},a^{3},a^{5},a^{6},a^{8},a^{19},a^{2}b,a^{8}b,a^{9}b,a^{11}b,a^{18}b\right\}, \\
\Gamma _{29} &=&\left\{a,a^{2},a^{3},a^{5},a^{6},a^{19},a^{2}b,a^{6}b,a^{8}b,a^{9}b,a^{16}b,a^{17}b\right\}, \\
\Gamma _{30} &=&\left\{a,a^{2},a^{3},a^{5},a^{6},a^{8},a^{19},a^{2}b,a^{6}b,a^{7}b,a^{9}b,a^{16}b\right\}, \\
\Gamma _{31} &=&\left\{a,a^{2},a^{3},a^{5},a^{6},a^{15},a^{19},a^{2}b,a^{6}b,a^{8}b,a^{9}b,a^{15}b\right\}, \\
\Gamma _{32} &=&\left\{a,a^{2},a^{3},a^{5},a^{7},a^{19},a^{2}b,a^{4}b,a^{7}b,a^{9}b,a^{11}b,a^{14}b\right\}, \\
\Gamma _{33} &=&\left\{a,a^{2},a^{3},a^{5},a^{6},a^{8},a^{15},a^{19},a^{2}b,a^{6}b,a^{7}b,a^{15}b\right\}, \\
\Gamma _{34} &=&\left\{a,a^{2},a^{3},a^{5},a^{7},a^{15},a^{16},a^{19},a^{2}b,a^{4}b,a^{7}b,a^{14}b\right\}, \\
\Gamma _{35} &=&\left\{a,a^{2},a^{3},a^{5},a^{7},a^{18},a^{19},a^{2}b,a^{4}b,a^{7}b,a^{14}b,a^{18}b\right\} \\
\Gamma _{36} &=&\left\{a,a^{2},a^{3},a^{5},a^{6},a^{11},a^{17},a^{2}b,a^{6}b,a^{7}b,a^{8}b,a^{10}b\right\}, \\
\Gamma _{37} &=&\left\{a,a^{2},a^{3},a^{5},a^{7},a^{8},a^{11},a^{17},a^{2}b,a^{8}b,a^{17}b,a^{18}b\right\}, \\
\Gamma _{38} &=&\left\{a,a^{2},a^{3},a^{5},a^{7},a^{8},a^{19},a^{2}b,a^{7}b,a^{8}b,a^{10}b,a^{17}b\right\}, \\
\Gamma _{39}
&=&\left\{a^{2},a^{5},a^{6},a^{7},a^{9},a^{11},a^{12},a^{15},a^{2}b,a^{6}b,a^{12}b,a^{14}b\right\}.
\end{eqnarray*}

\vspace{3mm}
The commuting graphs $G_i=C(H^{\natural},\Gamma_i) \
, \ i=25,26,\ldots,39$, yield the NSSD graphs with 12 vertices
depicted in Figs. 6 and 7.

\vspace{10mm}

\begin{center}
\includegraphics[height=4cm,keepaspectratio]{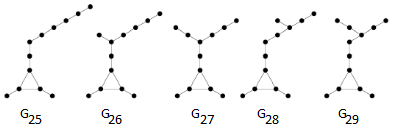}

\baselineskip=0.20in

\vspace{6mm}

{\bf Fig. 6.} NSSD graphs with 12 vertices.
\end{center}

\vspace{10mm}

\begin{center}
\includegraphics[height=8cm,keepaspectratio]{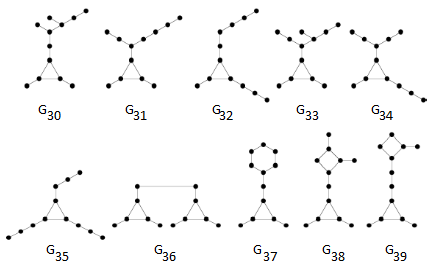}

\baselineskip=0.20in

\vspace{6mm}

{\bf Fig. 7.} NSSD graphs with 12 vertices.
\end{center}

\vspace{10mm}

\baselineskip=0.30in
At the end, consider the $H_{v}$-group
$H^{\natural}= (D_{2n},\circ)$, for $n=20$, and define the
following sets of vertices to construct the NSSD graphs with 12
and 14 vertices.
\begin{eqnarray*}
\Gamma _{40} &=&\left\{a,a^{2},a^{3},a^{5},a^{8},a^{11},a^{2}b,a^{7}b,a^{8}b,a^{9}b,a^{11}b,a^{17}b\right\}, \\
\Gamma _{41} &=&\left\{a^{3},a^{5},a^{7},a^{9},a^{11},a^{12},a^{14},a^{15},a^{19},a^{7}b,a^{11}b,a^{15}b\right\}, \\
\Gamma _{42}
&=&\left\{a^{2},a^{3},a^{7},a^{8},a^{12},a^{13},a^{7}b,a^{8}b,a^{12}b,a^{13}b,a^{14}b,a^{16}b,a^{17}b,a^{18}b\right\}.
\end{eqnarray*}
Also, consider the $H_{v}$-group $H^{\natural}= (D_{2n},\circ)$,
for $n=24$ and define the following set of vertices to construct
an NSSD graph with 16 vertices:
$$
\Gamma_{43} = \left\{a^{3},a^{4},a^{9},a^{10},a^{15},a^{16},a^{17},a^{3}b,a^{4}b,
a^{9}b,a^{10}b,a^{17}b,a^{18}b,a^{20}b,a^{21}b,a^{22}b\right\}
$$
The commuting graphs $G_i=C(H^{\natural},\Gamma_i) , \
i=40,\ldots,43$, are the following NSSD molecular graphs with 12,
14, and 16 vertices:

\vspace{10mm}

\begin{center}
\includegraphics[height=4.5cm,keepaspectratio]{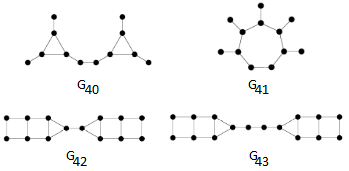}

\baselineskip=0.20in

\vspace{3mm}

{\bf Fig. 8.} NSSD graphs with 12, 14, and 16 vertices.
\end{center}

\vspace{5mm}

\baselineskip=0.25in

\begin{remark}
There are a lot of NSSD graphs but we have shown only a few here. One can find NSSD graphs of higher order by choosing high values of $n$.
\end{remark}

\section{Conclusion}
In this article we have defined an $H_{v}$-group and discussed its commuting graph. We have constructed NSSD molecular graphs from the commuting graph of this $H_{v}$-group. Also we have defined an algorithm that can construct NSSD graphs. In this paper, we have considered a hyperoperation on dihedral group given in Eq. $(\ref{1})$. For feature work in this direction on can use another hyperoperation and determine different NSSD graphs.

\section*{Acknowledgments} We  thank the referees for useful suggestions. This research is partially funded through Quaid-i-Azam University grant URF-2015

\end{document}